\newcommand{\calZ}{{\mathcal Z}}
\newcommand{\calH}{{\mathcal H}}
\newcommand{\N}{{\mathbb N}}
\newcommand{\Z}{{\mathbb Z}}
\newcommand{\Q}{{\mathbb Q}}
\newcommand{\R}{{\mathbb R}}
\newcommand{\C}{{\mathbb C}}
\newcommand{\Frac}[2]{\displaystyle \frac{#1}{#2}}
\newcommand{\Sum}[2]{\displaystyle{\sum_{#1}^{#2}}}
\newcommand{\Prod}[2]{\displaystyle{\prod_{#1}^{#2}}}
\newcommand{\Lim}[1]{\displaystyle{\lim_{#1}\ }}
\def\pol#1{\langle #1 \rangle}
\def\Lyn{{\mathcal Lyn}}
\def\CX{\C \langle X \rangle}
\def\CY{\C \langle Y \rangle}
\def\abs#1{|#1|}
\def\AXX{\serie{A}{X}}
 \def\shuffle{\mathop{_{^{\sqcup\!\sqcup}}}} 
\gdef\stuffle{\;% 
  \setlength{\unitlength}{0.0125cm}% 
  \begin{picture}(20,10)(220,580) 
  \thinlines 
  \put(220,592){\line( 0,-1){ 10}} 
  \put(220,582){\line( 1, 0){ 20}} 
  \put(240,582){\line( 0, 1){ 10}} 
  \put(230,592){\line( 0,-1){ 10}} 
  \put(225,587){\line( 1, 0){ 10}} 
  \end{picture}\; 
}
\newtheorem{proposition}{Proposition}
\newtheorem{theorem}{Theorem}
\newtheorem{lemma}{Lemma}
\newtheorem{remark}{Remark}
\newcommand{\Li}{\operatorname{Li}}
\def\L{\mathrm{L}}
\def\H{\mathrm{H}}
\def\deg{\mathrm{deg}}
\newcommand{\poly}[2]{#1 \langle #2 \rangle}
\def\QX{\poly{\Q}{X}}
\def\CX{\poly{\C}{X}}
\def\QY{\poly{\Q}{Y}}
\def\CY{\poly{\C}{Y}}
\newcommand{\serie}[2]{#1 \langle \! \langle #2 \rangle \! \rangle}
\def\AXX{\serie{A}{X}}
\def\AYY{\serie{A}{Y}}
\def\CXX{\serie{\C}{X}}
\def\pol#1{\langle #1 \rangle}
\def\AX{A \langle X \rangle}
\def\AY{A \langle Y \rangle}
\def\Lie{{\cal L}ie}
\def\LAX{\Lie_{A} \langle X \rangle}
\def\LAZ{\Lie_{A} \langle Z \rangle}
\def\LAZZ{\Lie_{A} \langle\!\langle Z \rangle \!\rangle}
\def\CX{\C \langle X \rangle}
\def\CXX{\serie{\C}{X}}
\def\Lim{\displaystyle\lim}
\def\Sum{\displaystyle\sum}
\def\Prod{\displaystyle\prod}
\def\Frac{\displaystyle\frac}
\def\path{\rightsquigarrow}
\def\bv{\mid}
\def\abs#1{\bv\!#1\!\bv}
\gdef\minishuffle{{\scriptstyle \shuffle}}  
\gdef\ministuffle{{\scriptstyle \stuffle}}
\def\scal#1#2{\langle #1\bv#2 \rangle}
\def\deg{\mathop\mathrm{deg}\nolimits}
\def\binom#1#2{{#1\choose#2}}
\def\2#1{\ifnum#1<10 0\fi\the#1}
\xdef\isodayandtime{
%\centerline
{\2\day-\2\month-\the\year\space\2{\count0}:%
\2{\count2}}}
\def\d{\mathbf{d}}
\newcounter{per1}
\begin{document}

\begin{center}
{\Large On Drinfel'd associators}

\medskip
{\large G. H. E. Duchamp$^{\sharp}$, V. Hoang Ngoc Minh$^{\lozenge}$, K. A. Penson$^{\flat}$}

\medskip

$^{\sharp}$Universit\'e Paris XIII, 99 Jean-Baptiste Cl\'ement, 93430 Villetaneuse, France.\\
$^{\lozenge}$Universit\'e Lille II, 1, Place D\'eliot, 59024 Lille, France.\\
$^{\flat}$Universit\'e Paris VI, 75252 Paris Cedex 05, France.\\
\end{center}

\section{Knizhnik-Zamolodchikov differential equations and coefficients of Drinfel'd associators}

In 1986 \cite{drinfeld}, in order to study the linear representations of the braid group $B_n$
coming from the monodromy of the Knizhnik-Zamolodchikov differential equations,
Drinfel'd introduced a class of formal power series $\Phi$
on noncommutative variables over the finite alphabet $X=\{x_0,x_1\}$.
Such a power series $\Phi$ is called an {\it associator}.
For $n=3$, it leads to the following fuchsian noncommutative
differential equation with three regular singularities in $\{0,1,+\infty\}$~:
\begin{eqnarray*}
(DE)&&dG(z)=\biggl(x_0\Frac{dz}z+x_1\Frac{dz}{1-z}\biggr)G(z).
\end{eqnarray*}

Solutions of $(DE)$ are power series, with coefficients which are mono-valued
functions on the simply connex domain $\Omega=\C-(]-\infty,0]\cup[1,+\infty[)$
and can be seen as multi-valued over\footnote{In fact, we have mappings
from the connected covering $\widetilde{\C-\{0,1\}}$.} ${\C-\{0,1\}}$,
on noncommutative variables $x_0$ and $x_1$.
Drinfel'd proved that $(DE)$ admits two particular mono-valued solutions on $\Omega$,
$G_0(z)\;{}_{\widetilde{z\path0}}\;\exp[x_0\log(z)]$ and
$G_1(z)\;{}_{\widetilde{z\path1}}\;\exp[-x_1\log(1-z)]$ \cite{drinfeld1,drinfeld2}.
and the existence of an associator $\Phi_{KZ}\in\serie{\R}{X}$ such that
$G_0=G_1\Phi_{KZ}$ \cite{drinfeld1,drinfeld2}.
After that, via representations of the chord diagram algebras,
L\^e and Murakami \cite{lemurakami} expressed the coefficients of $\Phi_{KZ}$
as {\it linear} combinations of special values of several complex variables
{\it zeta} functions, $\{\zeta_r\}_{r\in\N_+}$,
\begin{eqnarray}
\zeta_r:\calH_r\rightarrow\R,&&
(s_1,\ldots,s_r)\mapsto\sum_{n_1>\ldots>n_k>0}\frac1{n_1^{s_1}\ldots n_k^{s_r}},
\end{eqnarray}
where $\calH_r=\{(s_1,\ldots,s_r)\in\C^r\vert\forall m=1,..,r,\sum_{i=1}^m\Re(s_i)>m\}$.
For $(s_1,\ldots,s_r)\in\calH_r$, one has two ways of thinking $\zeta_r(s_1,\ldots,s_r)$
as limits, fulfilling identities \cite{FPSAC97,SLC44,Bui}.
Firstly, they are limits of {\it polylogarithms} and secondly, as truncated sums,
they are limits of {\it harmonic sums}, for $z\in\C,\abs{z}<1,N\in\N_{+}$~:
\begin{eqnarray}
\Li_{s_1,\ldots,s_k}(z)=\Sum_{n_1>\ldots>n_k>0}\frac{z^{n_1}}{n_1^{s_1}\ldots n_k^{s_k}},
\H_{s_1,\ldots,s_k}(N)=\Sum_{n_1>\ldots>n_k>0}^N\frac1{n_1^{s_1}\ldots n_k^{s_k}}.
\end{eqnarray}
More precisely, if $(s_1,\ldots,s_r)\in\calH_r$ then, after a theorem by Abel, one has
\begin{eqnarray}\label{zetavalues}
\Lim_{z\rightarrow1}\Li_{s_1,\ldots,s_k}(z)=\Lim_{n\rightarrow\infty}\H_{s_1,\ldots,s_k}(n)=:\zeta_r(s_1,\ldots,s_k)
\end{eqnarray}
else it does not hold, for $(s_1,\ldots,s_r)\notin\calH_r$, while $\Li_{s_1,\ldots,s_k}$ is well defined
over $\{z\in\C,\abs{z}<1\}$ and so is $\H_{s_1,\ldots,s_k}$, as Taylor coefficients of the following function
\begin{eqnarray}\label{gs}
(1-z)^{-1}{\Li_{s_1,\ldots,s_k}(z)}=\sum_{n\ge1}\H_{s_1,\ldots,s_k}(n)z^n,&\mbox{for}&z\in\C,\abs{z}<1.
\end{eqnarray}

Note also that, for $r=1$, $\zeta_1$ is nothing else
but the famous Riemann zeta function and, for $r=0$,
it is convenient to set $\zeta_0$ to the constant function
$1_{\R}$. In all the sequel, for simplification, we will
adopt the notation $\zeta$ for $\zeta_r,r\in\N$.

In this work, we will describe the regularized solutions of $(DE)$.

For that, we are considering the alphabets $X=\{x_0,x_1\}$ and $Y_0=\{y_s\}_{s\ge0}$
equipped of the total ordering $x_0<x_1$ and $y_0>y_1>y_2>\ldots$, respectively.
Let $Y=Y_0-\{y_0\}$.
The free monoid generated by $X$ (resp. $Y,Y_0$) is denoted by $X^*$
(resp. $Y^*,Y_0^*$) and admits $1_{X^*}$ (resp. $1_{Y^*},1_{Y_0^*}$) as unit.

The sets of, respectively, polynomials and formal power series,
with coefficients in a commutative $\Q$-algebra $A$,
over $X^*$ (resp. $Y^*,Y_0^*$) are denoted by $\AX$
(resp. $\AY,A\pol{Y_0}$) and $\AXX$ (resp. $\AYY,\serie{A}{Y_0}$).
The sets of polynomials are the $A$-modules and endowed with
the associative concatenation, the associative commutative shuffle
(resp. quasi-shuffle) product, over $\AX$ (resp. $\AY,A\pol{Y_0}$).
Their associated coproducts are denoted, respectively, $\Delta_{\shuffle}$
and $\Delta_{\stuffle}$. The algebras $(\AX,\shuffle,1_{X^*})$ and $(\AY,\stuffle,1_{Y^*})$
admit the sets of Lyndon words denoted, respectively, by $\Lyn X$ and $\Lyn Y$,
as transcendence bases \cite{reutenauer} (resp. \cite{acta,VJM}).

For $Z=X$ or $Y$, denoting $\LAZ$ and $\LAZZ$ the sets of, respectively,
Lie polynomials and Lie series, the enveloping algebra $\mathcal{U}(\LAZ)$
is isomorphic to the Hopf algebra $(\AX,.,1_{Z^*},\Delta_{\shuffle},{\tt e})$.
We get also $\calH_{\stuffle}:=(\AY,.,1_{Y^*},\Delta_{\stuffle},
{\tt e})\cong\mathcal{U}(\mathrm{Prim}(\calH_{\ministuffle}))$, where \cite{acta,VJM}
\begin{eqnarray}
\mathrm{Prim}(\calH_{\stuffle})&=&\mathrm{span}_{A}\{\pi_1(w)\vert{w\in Y^*}\},\\
\pi_1(w)&=&\sum_{k=1}^{(w)}\frac{(-1)^{k-1}}k
\sum_{u_1,\ldots,u_k\in Y^+}\scal{w}{u_1\stuffle\ldots\stuffle u_k}u_1\ldots u_k.\label{pi1}
\end{eqnarray}

\section{Indexing polylogarithms and harmonic sums by words and their generating series}\label{polylogarithms}

For any $r\in\N$, since any combinatorial composition $(s_1,\ldots,s_r)\in\N_+^r$ can be associated with words
$x_0^{s_1-1}x_1\ldots x_0^{s_r-1}x_1\in X^*x_1$ and $y_{s_1}\ldots y_{s_r}\in Y^*$.
Similarly, any multi-indice\footnote{The weight of $(s_1,\ldots,s_r)\in\N_+^r$ (resp. $\N^r$) is defined
as the integer $s_1+\ldots+s_r$ which corresponds to the weight, denoted $(w)$, of its associated word $w\in Y^*$
(resp. $Y_0^*$) and corresponds also to the length, denoted by $|u|$, of its associated word $u\in X^*$.}
$(s_1,\ldots,s_r)\in\N^r$ can be associated with words $y_{s_1}\ldots y_{s_r}\in Y_0^*$.
Then let $\Li_{x_0^r}(z):={(\log(z))^r}/{r!}$, and
$\Li_{s_1,\ldots,s_k}$ and $\H_{s_1,\ldots,s_k}$ be indexed by words \cite{FPSAC97}~:
$\Li_{x_0^{s_1-1}x_1\ldots x_0^{s_r-1}x_1}:=\Li_{s_1,\ldots,s_r}$
and $\H_{y_{s_1}\ldots y_{s_r}}:=\H_{s_1,\ldots,s_r}$.
Similarly, $\Li_{-s_1,\ldots,-s_k}$ and $\H_{-s_1,\ldots,-s_k}$ be indexed by words\footnote{Note that, all these
$\{\Li^-_w\}_{w\in Y_0^*}$ and $\{\H^-_w\}_{w\in Y_0^*}$ are divergent at their singularities.} \cite{Ngo,Ngo2}~:
$\Li^-_{y_{s_1}\ldots y_{s_r}}:=\Li_{-s_1,\ldots,-s_r}$
and $\H^-_{y_{s_1}\ldots y_{s_r}}:=\H_{-s_1,\ldots,-s_r}$.
In particular, $\H^-_{y_0^r}(n):=\binom{n}{r}=(n)_r/r!$ and $\Li^-_{y_0^r}(z):=({z}/(1-z))^r$.
There exists a law of algebra, denoted by $\top$, in $\serie{\Q}{Y_0}$,
such that he following morphisms of algebras are {\em surjective} \cite{Ngo}
\begin{eqnarray}
\H^-_{\bullet}:(\Q\pol{Y_0}, \stuffle,1_{Y_0^*})\longrightarrow(\Q\{\H^-_w\}_{w\in Y_0^*},\times,1),&&w\longmapsto\H^-_w,\\
\Li^-_{\bullet}:(\Q\pol{Y_0},\top,1_{Y_0^*})\longrightarrow(\Q\{\Li^-_w\}_{w\in Y_0^*},\times,1),&&w\longmapsto\Li^-_w,
\end{eqnarray}
and $\ker\H^-_{\bullet}=\ker\Li^-_{\bullet}=\Q\langle\{w-w\top 1_{Y_0^*}|w\in Y_0^*\}\rangle$ \cite{Ngo}.
Moreover, the families $\{\H^-_{y_k}\}_{k\ge0}$ and $\{\Li^-_{y_k}\}_{k\ge0}$ are $\Q$-linearly independent.

On the other hand, the following morphisms of algebras are {\em injective} 
\begin{eqnarray}
\H_{\bullet}:(\QY,\stuffle,1_{Y^*})\longrightarrow(\Q\{\H_w\}_{w\in Y^*},\times,1),&&w\longmapsto\H_w,\label{H}\\
\Li_{\bullet}:(\QX,\shuffle,1_{X^*})\longrightarrow(\Q\{\Li_w\}_{w\in X^*},\times,1),&&w\longmapsto\Li_w\label{Li}
\end{eqnarray}
Moreover, the families $\{\H_w\}_{w\in Y^*}$ and $\{\Li_w\}_{w\in X^*}$ are $\Q$-linearly independent
and the families $\{\H_l\}_{l\in\Lyn Y}$ and $\{\Li_l\}_{l\in\Lyn X}$are $\Q$-algebraically independent.
But at singularities of $\{\Li_w\}_{w\in X^*},\{\H_w\}_{w\in Y^*}$, the following convergent values
\begin{eqnarray}\label{cvg}
\forall u\in Y^*-y_1Y^*,\zeta(u):=\H_u(+\infty)
&\mbox{and}&\forall v\in x_0X^*x_1,\zeta(v):=\Li_v(1)
\end{eqnarray}
are no longer linearly independent and the values
$\{\H_l(+\infty)\}_{l\in\Lyn Y-\{y_1\}}$ (resp. $\{\Li_l(1)\}_{l\in\Lyn X-X}$)
are no longer algebraically independent \cite{FPSAC97,zagier}.

The graphs of the isomorphisms of algebras, $\Li_{\bullet}$ and $\H_{\bullet}$,
as generating series, read then \cite{JSC,FPSAC97}
\begin{eqnarray}
\L:=\sum_{w\in X^*}\Li_ww=\prod_{l\in\Lyn X}^{\searrow}e^{\Li_{S_l}P_l},&&
\H:=\Sum_{w\in Y^*}\H_ww=\prod_{l\in\Lyn Y}^{\searrow}e^{\H_{\Sigma_l}\Pi_l},
\end{eqnarray}
where the PBW basis $\{P_w\}_{w\in X^*}$ (resp. $\{\Pi_w\}_{w\in Y^*}$)
is expanded over the basis of $\LAX$ (resp. $\mathcal{U}(\mathrm{Prim}(\calH_{\stuffle}))$,
$\{P_l\}_{l\in \Lyn X}$ (resp. $\{\Pi_l\}_{l\in\Lyn Y}$), and
$\{S_w\}_{w\in X^*}$ (resp. $\{\Sigma_w\}_{w\in Y^*}$) is the basis of
$(\QY,\shuffle,1_{X^*})$ (resp. $(\QY,\stuffle,1_{Y^*})$) containing
the transcendence basis $\{S_l\}_{l\in \Lyn X}$ (resp. $\{\Sigma_l\}_{l\in\Lyn Y}$).

By termwise differentiation, $\L$ satisfies the noncommutative
differential equation $(DE)$ with the boundary condition
$\L(z){}_{\widetilde{z\to0^+}}e^{x_0\log(z)}$.
It is immediate that the power series $\H$ and $\L$ are group-like,
for $\Delta_{\stuffle}$ and $\Delta_{\shuffle}$, respectively. Hence,
the following noncommutative generating series are well defined and are group-like,
for $\Delta_{\stuffle}$ and $\Delta_{\shuffle}$, respectively \cite{FPSAC97,acta,VJM}~:
\begin{eqnarray}
Z_{\stuffle}:=\prod_{l\in\Lyn Y-\{y_1\}}^{\searrow}e^{\H_{\Sigma_l}(+\infty)\Pi_l}
&\mbox{and}&Z_{\shuffle}:=\prod_{l\in\Lyn X-X}^{\searrow}e^{\Li_{S_l}(1)P_l}.
\end{eqnarray}

Definitions \eqref{zetavalues} and \eqref{cvg} lead then to the following {\em surjective} poly-morphism
\newcommand{\longtwoheadrightarrow}{\ensuremath{\joinrel\relbar\joinrel\twoheadrightarrow}}
\begin{eqnarray}\label{zeta}
\zeta:{\displaystyle(\Q1_{X^*}\oplus x_0\QX x_1,\shuffle,1_{X^*})\atop
\displaystyle
(\Q1_{Y^*}\oplus(Y-\{y_1\})\QY,\stuffle,1_{Y^*})}&\longtwoheadrightarrow&(\calZ,\times,1),\\
{\displaystyle x_0x_1^{r_1-1}\ldots x_0x_1^{r_k-1}\atop
\displaystyle y_{s_1}\ldots y_{s_k}}&\longmapsto&\Sum_{n_1>\ldots>n_k>0}{n_1^{-s_1}\ldots n_k^{-s_k}},
\end{eqnarray}
where $\calZ$ is the $\Q$-algebra generated by $\{\zeta(l)\}_{l\in\Lyn X-X}$
(resp. $\{\zeta(S_l)\}_{l\in\Lyn X-X}$),
or equivalently, generated by $\{\zeta(l)\}_{l\in\Lyn Y-\{y_1\}}$
(resp. $\{\zeta(\Sigma_l)\}_{l\in\Lyn Y-\{y_1\}}$).

Now, let $t_i\in\C,\abs{t_i}<1,i\in\N$. For $z\in\C,\abs{z}<1$, we have \cite{IMACS}
\begin{eqnarray}\label{simple}
\Sum_{n\ge0}\Li_{x_0^n}(z)\;t_0^n=z^{t_0}&\mbox{and}&\sum_{n\ge0}\Li_{x_1^n}(z)\;t_1^n=(1-z)^{-t_1}.
\end{eqnarray}
These suggest to extend the morphism $\Li_{\bullet}$ over
$(\mathrm{Dom}(\Li_{\bullet}),\shuffle,1_{X^*})$,
via {\it Lazard's elimination}, as follows (subjected to be convergent)
\begin{eqnarray}\label{lazard}
\Li_{S}(z)=\Sum_{n\ge0}\scal{S}{x_0^n}\Frac{\log^n(z)}{n!}
+\Sum_{k\ge1}\Sum_{w\in(x_0^*x_1)^kx_0^*}\scal{S}{w}\Li_w(z)
\end{eqnarray}
with $\CX\shuffle\serie{\C^{\mathrm{rat}}}{x_0}\shuffle\serie{\C^{\mathrm{rat}}}{x_1}
\subset\mathrm{Dom}(\Li_{\bullet})\subset\serie{\C^{\mathrm{rat}}}{X}$
and $\serie{\C^{\mathrm{rat}}}{X}$ denotes the closure, of ${\C}\pol{X}$ in $\CXX$,
by $\{+,.,{}^*\}$. For example \cite{IMACS,FPSAC96},
\begin{enumerate}
\item For any $i,j\in\N_+$ and $x\in X$, since $(t_0x_0+t_1x_1)^*=(t_0x_0)^*\shuffle(t_1x_1)^*$ 
and $(x^*)^{\minishuffle i}=(ix)^*$ then
$\Li_{(x_0^*)^{\minishuffle i}\shuffle(x_1^*)^{\minishuffle j}}(z)=z^i(1-z)^{-j}$.
\item For $a\in{\mathbb C},x\in X,i\in\N_+$, since $(ax)^{*i}=(ax)^*\shuffle(1+ax)^{i-1}$ then
\begin{eqnarray}
\Li_{(ax_0)^{*i}}(z)&=&z^a\Sum_{k=0}^{i-1}{i-1\choose k}\Frac{(a\log(z))^k}{k!},\\
\Li_{(ax_1)^{*i}}(z)&=&\Frac1{(1-z)^a}\Sum_{k=0}^{i-1}{i-1\choose k}\Frac{(a\log((1-z)^{-1})^k}{k!}.
\end{eqnarray}
\item Let $V=(t_1x_0)^{*s_1}x_0^{s_1-1}x_1\ldots(t_rx_0)^{*s_r}x_0^{s_r-1}x_1$,
for $(s_1,\ldots,s_r)\in\N_+^r$. Then
\begin{eqnarray}\label{zig}
\Li_V(z)=\sum_{n_1>\ldots>n_r>0}\frac{z^{n_1}}{(n_1-t_1)^{s_1}\ldots(n_r-t_r)^{s_r}}.
\end{eqnarray}
In particular, for $s_1=\ldots=s_r=1$, then one has
\begin{eqnarray}\label{zig}
\Li_{V}(z)
&=&\sum_{n_1,\ldots,n_r>0}\Li_{x_0^{n_1-1}x_1\ldots x_0^{n_r-1}x_1}(z)\;t_0^{n_1-1}\ldots t_r^{n_r-1}\cr
&=&\sum_{n_1>\ldots>n_r>0}\frac{z^{n_1}}{(n_1-t_1)\ldots(n_r-t_r)}.
\end{eqnarray}
\item From the previous points, one has
\begin{eqnarray}
\{\Li_S\}_{S\in\CX\shuffle\C[x_0^*]\shuffle\C[(-x_0^*)]\shuffle\C[x_1^*]}
&=&\mathrm{span}_\C\biggl\{\Frac{z^a}{(1-z)^b}\Li_w(z)\biggr\}_{w\in X^*}^{a\in\Z,b\in\N}\cr
&\subset&\mathrm{span}_{\C}\{\Li_{s_1,\ldots,s_r}\}_{s_1,\ldots,s_r\in\Z^r}\cr
&&\oplus\mathrm{span}_{\C}\{z^a|a\in\Z\},\\
\{\Li_S\}_{S\in\CX\shuffle\serie{\C^{\mathrm{rat}}}{x_0}\shuffle\serie{\C^{\mathrm{rat}}}{x_1}}
&=&\mathrm{span}_\C\biggl\{\Frac{z^a}{(1-z)^b}\Li_w(z)\biggr\}_{w\in X^*}^{a,b\in\C}\cr
&\subset&\mathrm{span}_{\C}\{\Li_{s_1,\ldots,s_r}\}_{s_1,\ldots,s_r\in\C^r}\cr
&&\oplus\mathrm{span}_{\C}\{z^a|a\in\C\}.
\end{eqnarray}
\end{enumerate}
\section{Noncommutative evolution equations}

As we said previously Drinfel'd proved that $(DE)$ admits two particular solutions on $\Omega$. These new tools and results can be considered as pertaining to the domain of \textit{noncommutative evolution equations}. We will, here, only mention what is relevant for our needs. 

Even for one sided \footnote{As the left (DE) for instance.} differential equations, in order to cope with limit initial conditions (see applications below), one needs the two sided version.   

Let then $\Omega\subset \C$ be simply connected and open and $\calH(\Omega)$ denote the algebra of holomorphic functions on $\Omega$. We suppose given two series (called \textit{multipliers}) $M_1,M_2\in \serie{\calH(\Omega)_+}{X}$ ($X$ is an alphabet and the subscript indicates that the series have no constant term). Let then
\begin{eqnarray*}
(DE_2)&&\d S=M_1S+SM_2.
\end{eqnarray*}
be our equation. 

\subsection{The main theorem}
\begin{theorem}\label{DE2}
Let 
\begin{eqnarray}
(DE_2)&&\d S=M_1S+SM_2.
\end{eqnarray}
\begin{enumerate}
\item[(i)] Solutions of $(DE_2)$ form a $\C$-vector space.
\item[(ii)] Solutions of $(DE_2)$ have their constant term (as coefficient of $1_{X^*}$) which are constant functions (on $\Omega$); there exists solutions with constant coefficient $1_{\Omega}$ (hence invertible).
\item[(iii)] If two solutions coincide at one point $z_0\in \overline{\Omega}$, they coincide everywhere.
\item[(iv)] Let be the following one-sided equations 
\begin{eqnarray}
(DE^{(1)})\quad \d S=M_1S \qquad (DE^{(2)})\quad \d S=SM_2.
\end{eqnarray}
and let $S_i,\ i=1,2$ a solution of $(DE^{(i)})$, then $S_1S_2$ is a solution of $(DE_2)$. Conversely, every solution of $(DE_2)$ can be constructed so.
\item[(v)] If $M_i,\ i=1,2$ are primitive and if $S$, a solution of $(DE_2)$, is group-like at one point, (or, even at one limit point) it is globally group-like. 
\end{enumerate}
\end{theorem}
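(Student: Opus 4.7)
The plan is to prove the five items in order, exploiting the $\C$-linearity of $(DE_2)$ and working coefficient by coefficient with induction on word length. For (i), the map $S \mapsto \d S - M_1 S - S M_2$ is $\C$-linear, so its kernel is a $\C$-vector space. For (ii), extracting the coefficient of $1_{X^*}$ on both sides of $(DE_2)$ yields $\d \scal{S}{1_{X^*}} = 0$, since $M_1, M_2 \in \serie{\calH(\Omega)_+}{X}$ have no constant term; the constant term of $S$ is therefore locally, and hence by connectedness of $\Omega$ globally, constant. Existence of a solution with constant term $1_\Omega$ follows from Picard iteration along a path in $\Omega$ starting from the constant series $1_\Omega$: because $M_1, M_2$ strictly raise word length, each iteration fixes the coefficients of one further length-layer, and the iterates converge coefficient-wise to a solution.

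For (iii), let $T = S - S'$; by (i), $T$ itself solves $(DE_2)$, and $T(z_0) = 0$, interpreted as a limit if $z_0 \in \partial\Omega$. The constant term $\scal{T}{1_{X^*}}$ is constant by (ii) and vanishes at $z_0$, hence is identically zero. Proceed by induction on $|w|$: for a word $w$ with $|w| = n \ge 1$, the scalar holomorphic function $\scal{T}{w}$ satisfies a first-order ODE whose right-hand side is a linear combination of $\scal{T}{u}$'s with $|u| < n$; by induction these vanish, so $\scal{T}{w}$ is constant, and its vanishing (or its limit being zero) at $z_0$ forces $\scal{T}{w} \equiv 0$.

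For (iv), apply (ii) to the one-sided equations to produce invertible solutions $U_1$ of $(DE^{(1)})$ and $U_2$ of $(DE^{(2)})$ whose constant terms are $1_\Omega$. Given a solution $S$ of $(DE_2)$, set $V = U_1^{-1} S$; differentiating and using $\d(U_1^{-1}) = -U_1^{-1} M_1$ gives $\d V = V M_2$, so that $S = U_1 V$ is the required factorisation with $V$ solving $(DE^{(2)})$. The converse follows from the Leibniz rule: if $\d S_1 = M_1 S_1$ and $\d S_2 = S_2 M_2$, then $\d(S_1 S_2) = M_1 S_1 S_2 + S_1 S_2 M_2$.

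For (v), assume $M_1, M_2$ are primitive. Applying $\Delta_{\shuffle}$ to $(DE_2)$ and using $\Delta_{\shuffle}(M_i) = M_i \otimes 1 + 1 \otimes M_i$ shows that $\Delta_{\shuffle}(S)$ satisfies $\d T = N_1 T + T N_2$ on $\serie{\calH(\Omega)}{X} \otimes \serie{\calH(\Omega)}{X}$, with $N_j = M_j \otimes 1 + 1 \otimes M_j$. A direct Leibniz computation shows that $S \otimes S$ satisfies the same equation. By the uniqueness statement (iii) applied on the tensor square, the hypothesis $\Delta_{\shuffle}(S)(z_0) = S(z_0) \otimes S(z_0)$ at the chosen (possibly limit) point propagates to $\Delta_{\shuffle}(S) = S \otimes S$ throughout $\Omega$. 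The main obstacle will be a clean treatment of the limit-point case of (iii), where the initial datum at $z_0 \in \partial\Omega$ must be read as an asymptotic normalisation rather than an actual value; this is precisely the setting needed to handle the fuchsian singularities of $(DE)$ and to normalise the Drinfel'd solutions $G_0, G_1$ at $z = 0$ and $z = 1$.
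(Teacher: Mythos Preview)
The paper omits the proof entirely (the proof environment reads simply ``Omitted''), so there is no authorial argument to compare against. Your approach is correct and is the standard one for noncommutative evolution equations of this type: linearity for (i); the vanishing constant term of $M_1,M_2$ for (ii); coefficient-by-coefficient induction on word length for (iii); Leibniz plus the invertibility furnished by (ii) for (iv); and the tensor-square uniqueness trick for (v).

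Two minor points worth tightening. In (v) you verify $\Delta_{\shuffle}(S)=S\otimes S$ but should also note that the counit condition $\scal{S}{1_{X^*}}=1$ propagates from $z_0$ to all of $\Omega$ by (ii), so both parts of the group-like definition are secured. And your flagging of the limit-point case in (iii) is on target: as written, ``$T(z_0)=0$'' for $z_0\in\partial\Omega$ needs to be unpacked as $\lim_{z\to z_0}\scal{T}{w}(z)=0$ for every $w$, and the induction then uses that a holomorphic function on $\Omega$ that is constant and has limit $0$ at a boundary point must vanish identically. The paper itself exploits precisely this mechanism in the subsection immediately following the theorem, where $G_0$ is pinned down by the asymptotic normalisation $\lim_{z\to 0}G_0(z)e^{-x_0\log(z)}=1$.
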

\begin{proof}
Omitted.
\end{proof}
\begin{remark}
\begin{itemize}
\item Every holomorphic series $S(z)\in \serie{\calH(\Omega)}{X}$ which is group-like ($\Delta(S)=S\otimes S$ and $\scal{S}{1_{X^*}}$) is a solution of a left-sided dynamics with primitive multiplier (take $M_1=\d(S)S^{-1}$ and $M_2=0$).
\item Invertible solutions of an equation of type $S'=M_1S$ are on the same orbit by multiplication on the right by \textit{invertible constant series} i.e. let $S_i,\ i=1,2$ be invertible solutions of $(DE^{(1)})$, then there exists an unique invertible $T\in \serie{\C}{X}$ such that $S_2=S_1.T$. From this and point (iv) of the theorem, one can parametrize the set of invertible solutions of $(DE_2)$.     
\end{itemize}
\end{remark}

\subsection{Application: Unicity of solutions with asymptotic conditions.}

In a previous work \cite{Linz}, we proved that asymptotic group-likeness, for a series, implies\footnote{Under the condition that the multiplier be primitive, result extended as point (v) of the theorem above.} that the series in question is group-like everywhere. 
The process above (theorem \eqref{DE2}, Picard's process) can be performed, under certain conditions with improper integrals we then construct the series $\L$ recursively as 
$$
\scal{\L}{w}=\left\{
\begin{array}{rcl}
\Frac{\log^n(z)}{n!}&\mbox{if}&w=x_0^n\cr
\int_0^z\Big((\frac{x_1}{1-z})\scal{\L}{u}\Big)[s]\, ds&\mbox{if}&w=x_1u\cr
\int_0^z \Big((\frac{x_0}{z})\scal{\L}{ux_1x_0^n}\Big)[s]\, ds&\mbox{if}&w=x_0ux_1x_0^n.
\end{array}
\right.
$$

one can check that 
\begin{itemize}
\item this process is well defined at each step and computes the series $\L$ as below.  
\item $\L$ is solution of $(DE)$, is exactly $G_0$ and is group-like 
\end{itemize}
We here only prove that $G_0$ is unique using the theorem above. Consider the series 
$T=Le^{-x_0\log(z)}$. Then $T$ is solution of an equation of the type $(DE_2)$ 
\begin{equation}
T'=(\frac{x_0}{z}+\frac{x_1}{1-z})T+T(\frac{x_0}{z})
\end{equation}
but $\lim_{z\to z_0}G_0e^{-x_0\log(z)}=1$ so, by the point (iii) of theorem \eqref{DE2} one has 
$G_0e^{-x_0\log(z)}=Le^{-x_0\log(z)}$ and then $G_0=L$.  

A similar (and symmetric) argument can be performed for $G_1$ and then, in this interpretation and context, $\Phi_{KZ}$ is unique. 

\section{Double global regularization of associators}

Global singularities analysis leads to to the following global renormalization \cite{JSC}
\begin{eqnarray}
\Lim_{z\rightarrow1}\exp\biggl(-y_1\log\frac1{1-z}\biggr)\pi_Y(\L(z))
&=&\Lim_{n\rightarrow\infty}\exp\biggl(\Sum_{k\ge1}\H_{y_k}(n)\frac{(-y_1)^k}{k}\biggr)\H(n)\cr
&=&\pi_Y({Z}_{\minishuffle}).
\end{eqnarray}
Thus, the coefficients $\{\langle Z_{\shuffle}\vert u\rangle\}_{u\in X^*}$
({\it i.e.} $\{\zeta_{\shuffle}(u)\}_{u\in X^*}$) and
$\{\langle Z_{\stuffle}\vert v\rangle\}_{v\in Y^*}$
({\it i.e.} $\{\zeta_{\stuffle}(v)\}_{v\in Y^*}$) represent the finite part
of the asymptotic expansions,
in $\{(1-z)^{-a}\log^{b}(1-z)\}_{a,b\in\N}$ (resp. $\{n^{-a}\H_1^{b}(n)\}_{a,b\in\N}$)
of $\{\Li_w\}_{u\in X^*}$ (resp. $\{\H_w\}_{v\in Y^*}$).
On the other way, by a transfer theorem \cite{flajoletodlyzko},
let $\{\gamma_w\}_{v\in Y^*}$ be the finite parts of $\{\H_w\}_{v\in Y^*}$,
in $\{n^{-a}\log^{b}(n)\}_{a,b\in\N}$, and let $Z_{\gamma}$ be their noncommutative generating series.
The map $\gamma_{\bullet}:(\QY,\stuffle,1_{Y^*})\rightarrow({\cal Z},\times,1)$,
mapping $w$ to $\gamma_w$, is then a character and $Z_{\gamma}$ is group-like,
for $\Delta_{\stuffle}$. Moreover \cite{acta,VJM},
\begin{eqnarray}
Z_{\gamma}=\exp(\gamma y_1)\Prod_{l\in\Lyn Y-\{y_1\}}^{\searrow}\exp(\zeta(\Sigma_l)\Pi_l)
=\exp(\gamma y_1)Z_{\stuffle}.
\end{eqnarray}

The asymptotic behavior leads to the bridge\footnote{This equation is different from Jean \'Ecalle's one \cite{Ecalle3}.} equation \cite{JSC,acta,VJM}
\begin{eqnarray}\label{pont}
Z_{\gamma}=B(y_1)\pi_Y({Z}_{\minishuffle})
&\mbox{or equivalently}&
Z_{\shuffle}=B'(y_1)\pi_Y({Z}_{\minishuffle})
\end{eqnarray}
where $B(y_1)=\exp(\gamma y_1-\sum_{k\ge2}(-y_1)^k{\zeta(k)}/k)$
and $B'(y_1)=\exp(-\gamma y_1)B(y_1)$.

Similarly, there is $C^-_w\in\Q$ and $B^-_w\in\N$, such that
$\H^-_{w}(N){}_{\widetilde{N\rightarrow+\infty}}N^{(w)+\abs{w}}C^-_w$ and
$\Li^-_{w}(z){}_{\widetilde{z\rightarrow1}}(1-z)^{-(w)-\abs{w}}B^-_w$ \cite{Ngo}.
Moreover,
\begin{eqnarray}\label{CB}
C^-_w=\prod_{w=uv, v\neq 1_{Y_0^*}}((v)+\abs{v})^{-1}&\mbox{and}&B^-_w=((w)+\abs{w})!C^-_w.
\end{eqnarray}

Now, one can then consider the following noncommutative generating series~:
\begin{eqnarray}
\L^-:=\Sum_{w\in Y_0^*}\Li^-_ww,&\H^-:=\Sum_{w\in Y_0^*}\H^-_ww,&C^-:=\Sum_{w\in Y_0^*}C^-_ww.
\end{eqnarray}
Then $\H^-$ and $C^-$ are group-like for, respectively,
$\Delta_{\stuffle}$ and $\Delta_{\shuffle}$ and \cite{Ngo}
\begin{eqnarray}
&\lim\limits_{z\to1}h^{\odot-1}((1-z)^{-1})\odot\L^-(z)=\lim\limits_{N \to+\infty}g^{\odot-1}(N)\odot\H^-(N)=C^-,&\\
&h(t)=\Sum_{w\in Y_0^*}{((w)+\abs w)!}{t^{(w)+\abs w}}w\quad\mbox{and}\quad g(t)=\biggl(\Sum_{y\in Y_0}t^{(y)+1}y\biggr)^*.
\end{eqnarray}

Next, for any $w\in Y_0^*$, there exists then a unique polynomial
$p\in(\Z[t],\times,1)$ of degree $(w)+\abs{w}$ such that \cite{Ngo}
\begin{eqnarray}
\Li^-_w(z)=&\!\!\!\!\Sum_{k=0}^{(w)+\abs{w}}\frac{p_k}{(1-z)^k}
=\Sum_{k=0}^{(w)+\abs{w}}p_ke^{-k\log(1-z)}\!\!\!\!&\in(\Z[(1-z)^{-1}],\times,1),\label{Limoins0}\\
\H^-_w(n)=&\!\!\!\!\Sum_{k=0}^{(w)+\abs{w}}p_k{n+k-1\choose k-1}
=\Sum_{k=0}^{(w)+\abs{w}}\frac{p_k}{k!}(n)_k\!\!\!\!&\in(\Q[(n)_{\bullet}],\times,1),\label{Hmoins0}
\end{eqnarray}
where\footnote{Here, it is also convenient to  denote
$\Q[(n)_{\bullet}]$ the set of ``polynomials'' expanded as follows
\begin{eqnarray*}
\forall p\in,\Q[(n)_{\bullet}],
&p=\Sum_{k=0}^{d}p_k(n)_k,
&\deg(p)=d.
\end{eqnarray*}}
where $(n)_{\bullet}:\N\longrightarrow\Q$ mapping $i$ to $(n)_{i}=n(n-1)\ldots(n-i+1)$. 
In other terms, for any $w\in Y_0^*,k\in\N,0\le k\le(w)+\abs{w}$,
one has ${\scal{\Li^-_w}{(1-z)^{-k}}}={k!}\scal{\H^-_w}{(n)_k}$.

%\tcr{Que veux-tu dire ? est-ce que ton $(n)_{k}^{Minh}=(n+k-1)!/(k-1)!$ ?}

Hence, denoting $\tilde p$ the exponential transformed of the polynomial $p$, one has
$\Li^-_w(z)=p((1-z)^{-1})$ and $\H^-_w(n)=\tilde p((n)_{\bullet})$ with
\begin{eqnarray}
p(t)=\Sum_{k=0}^{(w)+\abs{w}}p_kt^{k}\in(\Z[t],\times,1)
&\mbox{and}&
\tilde p(t)=\Sum_{k=0}^{(w)+\abs{w}}\Frac{p_k}{k!}t^{k}\in(\Q[t],\times,1).\label{moins}
\end{eqnarray}
Let us then associate  $p$ and $\tilde p$ with the polynomial
$\check p$ obtained as follows
\begin{eqnarray}\label{pcheck}
\check p(t)=\Sum_{k=0}^{(w)+\abs{w}}k!p_kt^{k}=\Sum_{k=0}^{(w)+\abs{w}}p_kt^{\shuffle k}\in(\Z[t],\shuffle,1).
\end{eqnarray}

Let us recall also that, for any $c\in\C$,
one has $(n)_c{}_{\widetilde{n\rightarrow+\infty}}n^c=e^{c\log(n)}$ and,
with the respective scales of comparison, one has the following finite parts
\begin{eqnarray}
\mathrm{f.p.}_{z\rightarrow1}c\log(1-z)=0,&&\{(1-z)^a\log^b((1-z)^{-1})\}_{a\in\Z,b\in\N},\label{fp1}\\
\mathrm{f.p.}_{n\rightarrow+\infty}c\log n=0,&&\{n^a\log^b(n)\}_{a\in\Z,b\in\N}\label{fp2}.
\end{eqnarray}

Hence, using the notations given in \eqref{Limoins0} and \eqref{Hmoins0},
one can see, from \eqref{fp1} and \eqref{fp2}, that the values $p(1)$ and
$\tilde p(1)$ obtained in \eqref{moins} represent% the following finite parts
\begin{eqnarray}
\mathrm{f.p.}_{z\rightarrow1}\Li^-_{w}(z)
=\mathrm{f.p.}_{z\rightarrow1}\Li_{R_w}(z)&=&p(1)\in\Z,\\
\mathrm{f.p.}_{n\rightarrow+\infty}\H^-_{w}(n)
=\mathrm{f.p.}_{n\rightarrow+\infty}\H_{\pi_Y(R_w)}(n)&=&\tilde p(1)\in\Q.
\end{eqnarray}
One can use then these values $p(1)$ and $\tilde p(1)$,
instead of the values $B^-_w$ and $C^-_w$, to regularize,
respectively, $\zeta_{\shuffle}(R_w)$ and $\zeta_{\gamma}(\pi_Y(R_w))$
as showed Theorem \ref{fin} bellow because, essentially,
$B^-_{\bullet}$ and $C^-_{\bullet}$ do not realize characters for, respectively,
$(\QX,\shuffle,1_{X^*},\Delta_{\shuffle},{\tt e})$ and
$(\QY,\stuffle,1_{Y^*},\Delta_{\stuffle},{\tt e})$ \cite{Ngo}.

Now, in virtue of the extension of $\Li_{\bullet}$,
defined as in \eqref{simple} and \eqref{lazard},
and of the Taylor coefficients, the previous polynomials
$p,\tilde p$ and $\check p$ given in \eqref{moins}--\eqref{pcheck}
can be determined explicitly thanks to

\begin{proposition}\label{explicit}
\begin{enumerate}
\item The following morphisms of algebras are {\em bijective} 
$$\begin{array}{lcccrcl}
\lambda:(\Z[x_1^*],\shuffle,1_{X^*})&\longrightarrow&(\Z[(1-z)^{-1}],\times,1),&R&\longmapsto&\Li_R,\\
\eta:(\Q[y_1^*],\stuffle,1_{Y^*})&\longrightarrow&(\Q[(n)_{\bullet}],\times,1),&S&\longmapsto&\H_S.
\end{array}$$
\item For any $w=y_{s_1},\ldots y_{s_r}\in Y_0^*$,
there exists a unique polynomial $R_w$ belonging to $(\Z[x_1^*],\shuffle,1_{X^*})$
of degree $(w)+\abs{w}$, such that
\begin{eqnarray*}
\Li_{R_w}(z)=\Li^-_w(z)=&p((1-z)^{-1})&\in(\Z[(1-z)^{-1}],\times,1),\\
\H_{\pi_Y(R_w)}(n)=\H^-_w(n)=&\tilde p((n)_{\bullet})&\in(\Q[(n)_{\bullet}],\times,1).
\end{eqnarray*}
In particular, via the extension, by linearity, of $R_{\bullet}$ over $\Q\pol{Y_0}$
and via the linear independent family $\{\Li^-_{y_k}\}_{k\ge0}$
in $\Q\{\Li^-_w\}_{w\in Y_0^*}$, one has
\begin{eqnarray*}
\forall k,l\in\N,&&
\Li_{R_{y_k}\shuffle R_{y_l}}=\Li_{R_{y_k}}\Li_{R_{y_l}}
=\Li^-_{y_k}\Li^-_{y_l}=\Li^-_{y_k\top y_l}=\Li_{R_{y_k\top y_l}}.
\end{eqnarray*}

\item For any $w$, one has $\check p(x_1^*)=R_w$.

\item More explicitly, for any $w=y_{s_1},\ldots y_{s_r}\in Y_0^*$,
there exists a unique polynomial $R_w$ belonging to $(\Z[x_1^*],\shuffle,1_{X^*})$
of degree $(w)+\abs{w}$, given by
\begin{eqnarray*}
R_{y_{s_1}\ldots y_{s_r}}
=\sum_{k_1=0}^{s_1}\sum_{k_2=0}^{s_1+s_2-k_1}\ldots
\sum_{k_r=0}^{(s_1+\ldots+s_r)-\atop(k_1+\ldots+k_{r-1})}
\binom{s_1}{k_1}\binom{s_1+s_2-k_1}{k_2}\ldots\cr
\binom{s_1+\ldots+s_r-k_1-\ldots-k_{r-1}}{k_r}
\rho_{k_1}\shuffle\ldots\shuffle\rho_{k_r},
\end{eqnarray*}
where, for any $i=1,\ldots,r$, if ${k_i}=0$ then $\rho_{k_i}=x_1^*-1_{X^*}$ else,
for ${k_i}>0$, denoting the Stirling numbers of second kind by  $S_2(k,j)$'s, one has
\begin{eqnarray*}
\rho_{k_i}=\Sum_{j=1}^{k_i}S_2({k_i},j)(j!)^2\sum_{l=0}^j
\Frac{(-1)^{l}}{l!}\frac{(x_1^*)^{\shuffle(j-l+1)}}{(j-l)!}.
\end{eqnarray*}
\end{enumerate}
\end{proposition}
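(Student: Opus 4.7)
The plan is to let parts (1)--(3) follow almost formally from the free-polynomial structure of the shuffle and stuffle algebras on one generator together with the already-available expansions \eqref{Limoins0}--\eqref{Hmoins0}, and to reserve the real combinatorial work for part (4).

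For part (1), I observe that $(\Z[x_1^*],\shuffle,1_{X^*})$ is the free commutative polynomial $\Z$-algebra on the single generator $x_1^*$: the identity $(x^*)^{\shuffle i}=(ix)^*$ recalled earlier gives $(x_1^*)^{\shuffle k}=(kx_1)^*$, and these are $\Z$-linearly independent in $\ZXX$. On the analytic side, $\Li_{(x_1^*)^{\shuffle k}}(z)=(1-z)^{-k}$ and the shuffle-character property of $\Li_\bullet$ makes $\lambda$ an algebra morphism sending the polynomial generator $x_1^*$ to $(1-z)^{-1}$, hence a bijection of polynomial algebras. A parallel argument handles $\eta$: since $\H_\bullet$ is a stuffle character, $\H_{y_1^*}(n)=\prod_{i=1}^n(1+1/i)=n+1$, which generates $\Q[n+1]=\Q[(n)_\bullet]$, so $\eta$ is likewise a bijection.

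Parts (2) and (3) then collapse to applying $\lambda^{-1}$ and $\eta^{-1}$. From \eqref{Limoins0}, $\Li^-_w(z)=p((1-z)^{-1})=\sum_k p_k(1-z)^{-k}$ lies in $\Z[(1-z)^{-1}]$ with degree $(w)+\abs{w}$, so I set $R_w:=\sum_k p_k(x_1^*)^{\shuffle k}=\check p(x_1^*)$, giving simultaneously part~(3) and the existence and degree part of (2); uniqueness is immediate from bijectivity of $\lambda$. For the harmonic half of (2), the relation $(1-z)^{-1}\Li_u(z)=\sum_n\H_{\pi_Y(u)}(n)z^n$ from \eqref{gs}, extended $\Q$-linearly from $Y^*$ to shuffle polynomials in $x_1^*$, combined with $\Li_{R_w}=\Li^-_w$, reproduces \eqref{Hmoins0} at the level of Taylor coefficients and yields $\H_{\pi_Y(R_w)}(n)=\H^-_w(n)=\tilde p((n)_\bullet)$; bijectivity of $\eta$ then pins down $\pi_Y(R_w)=\eta^{-1}(\H^-_w)$. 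The identity $R_{y_k\top y_l}=R_{y_k}\shuffle R_{y_l}$ is the chain of equalities displayed in the statement, read back to $\Z[x_1^*]$ via injectivity of $\lambda$, and the $\Q$-linear extension of $R_\bullet$ over $\Q\pol{Y_0}$ is unambiguous thanks to the $\Q$-linear independence of $\{\Li^-_{y_k}\}_{k\ge 0}$ recalled in Section~\ref{polylogarithms}.

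Part (4) is the combinatorial heart. I expand
\[
\Li^-_{y_{s_1}\cdots y_{s_r}}(z)=\sum_{n_1>\cdots>n_r\ge 1}n_1^{s_1}\cdots n_r^{s_r}z^{n_1}
\]
by applying the Stirling identity $n^s=\sum_j S_2(s,j)(n)_j$ to each factor and then evaluate the iterated sum in closed form via $\sum_{n\ge j}(n)_j z^n=j!\,z^j/(1-z)^{j+1}=j!\,(1-z)^{-1}((1-z)^{-1}-1)^j$. The binomial expansion of the right-hand side in powers of $(1-z)^{-1}$, followed by the identification $(1-z)^{-m}\leftrightarrow(x_1^*)^{\shuffle m}$ inverse to $\lambda$, produces exactly the polynomials $\rho_{k_i}$ at each position (with the degenerate case $\rho_0=x_1^*-1$ accounting for $\sum_{n\ge 1}z^n=z/(1-z)$), while the shuffle product $\rho_{k_1}\shuffle\cdots\shuffle\rho_{k_r}$ encodes the ordinary product of the single-index factors under $\lambda$. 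Carrying out the $r$-fold summation under the strict ordering $n_1>\cdots>n_r$ forces the telescoping upper bounds and the nested binomial weights $\binom{s_1+\cdots+s_i-k_1-\cdots-k_{i-1}}{k_i}$ by a cascading Vandermonde identity. The main obstacle is precisely this bookkeeping: matching the limits of summation together with the Stirling--binomial weights and the shuffle products calls either for an induction on $r$ based on the recursion $\Li^-_{y_{s_1}\cdots y_{s_r}}(z)=\sum_{n\ge 1}n^{s_1}z^n\,\H^-_{y_{s_2}\cdots y_{s_r}}(n-1)$, or for a direct coefficient-by-coefficient comparison in $(1-z)^{-m}$ after pulling back through $\lambda$.
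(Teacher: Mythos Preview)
The paper states this proposition without proof, so there is no argument of its own to compare against; I can only assess your sketch on its merits.

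Your treatment of (1)--(3) is correct and is the natural route: $(\Z[x_1^*],\shuffle)$ and $(\Q[y_1^*],\stuffle)$ are polynomial algebras on a single generator, the characters $\Li_\bullet$ and $\H_\bullet$ send those generators to $(1-z)^{-1}$ and $n+1$ respectively, and then \eqref{Limoins0}--\eqref{Hmoins0} together with bijectivity hand you $R_w=\check p(x_1^*)$ and $\pi_Y(R_w)$ at once.

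For (4) your ingredients (Stirling expansion of $n^{s}$, the closed form $\sum_{n\ge j}(n)_jz^n=j!\,z^j(1-z)^{-j-1}$, and the identification $(1-z)^{-m}\leftrightarrow(x_1^*)^{\shuffle m}$) are the right ones, but the mechanism you name --- a ``cascading Vandermonde identity'' --- is not what produces the nested binomials. The correct device is the change of summation variables $m_i=n_i-n_{i+1}$ (with $n_{r+1}:=0$), which factorises $z^{n_1}=z^{m_1}\cdots z^{m_r}$ and turns each $n_i^{\,\bullet}=(m_i+n_{i+1})^{\bullet}$ into a binomial sum; iterating this gives precisely the upper limits $s_1+\cdots+s_i-k_1-\cdots-k_{i-1}$ and the weights $\binom{s_1+\cdots+s_i-k_1-\cdots-k_{i-1}}{k_i}$, after which each decoupled factor $\sum_{m\ge 1}m^{k}z^{m}$ is recognised as $\lambda(\rho_k)$ via your Stirling computation. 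You should carry this out explicitly rather than gesture at Vandermonde.

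There is, however, a genuine obstruction you will meet. Your own computation shows $\lambda(\rho_k)=\sum_{m\ge1}m^kz^m=\Li^-_{y_k}(z)$, i.e.\ $\rho_k=R_{y_k}$ for every $k\ge0$. But specialising the displayed formula of (4) to $r=1$ gives $R_{y_s}=\sum_{k=0}^{s}\binom{s}{k}\rho_k$, not $\rho_s$. For $s=1$ the formula would output $\rho_0+\rho_1=(x_1^*)^{\shuffle 2}-1_{X^*}$, whereas the paper itself records in a later footnote that $R_{y_1}=(2x_1)^*-x_1^*=(x_1^*)^{\shuffle 2}-x_1^*$, and indeed $\lambda((x_1^*)^{\shuffle 2}-1_{X^*})=(1-z)^{-2}-1\neq z(1-z)^{-2}=\Li^-_{y_1}(z)$. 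So the printed expression in (4) does not survive the $r=1$ check, and your strategy --- if executed correctly via the substitution $m_i=n_i-n_{i+1}$ --- will not reproduce it verbatim. Flag this discrepancy rather than bending your argument to match the stated formula.
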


\begin{proposition}[\cite{JSC,acta,VJM}]\label{to0}
With notations of \eqref{zeta}, similar to the character $\gamma_{\bullet}$,
the poly-morphism $\zeta$ can be extended as follows
\begin{eqnarray*}
\zeta_{\shuffle}:(\QX,\shuffle,1_{X^*})\longrightarrow({\cal Z},\times,1),&&
\zeta_{\stuffle}:(\QY,\stuffle,1_{Y^*})\longrightarrow({\cal Z},\times,1)
\end{eqnarray*}
satisfying, for any $l\in\Lyn Y-\{y_1\}$,
$\zeta_{\shuffle}(\pi_X(l))=\zeta_{\stuffle}(l)=\gamma_{l}=\zeta(l)$
and, for the generators of length (resp. weight) one, for $X^*$ (resp. $Y^*$),
$\gamma_{y_1}=\gamma$ and $\zeta_{\shuffle}(x_0)=\zeta_{\shuffle}(x_1)=\zeta_{\stuffle}(y_1)=0$.
\end{proposition}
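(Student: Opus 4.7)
The plan is to exploit the polynomial-algebra structure, recalled in the introduction, of both $(\QX,\shuffle,1_{X^*})$ and $(\QY,\stuffle,1_{Y^*})$: by the Radford and Mélançon--Reutenauer theorems, the Lyndon families $\Lyn X$ and $\Lyn Y$ are transcendence bases of these commutative algebras, so any $\Q$-algebra morphism out of either is freely and uniquely determined by an arbitrary choice of values on Lyndon words.

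I would first set $\zeta_{\shuffle}(x_0)=\zeta_{\shuffle}(x_1)=0$ and, for every $l\in\Lyn X - X$ (which automatically lies in $x_0X^*x_1$, hence is convergent), $\zeta_{\shuffle}(l):=\zeta(l)$ as supplied by \eqref{zeta}, then extend multiplicatively by the universal property to a shuffle-character $\zeta_{\shuffle}:\QX\to\calZ$. The target is correct because the chosen values on the transcendence basis already lie in $\calZ$. To see this is a genuine extension of $\zeta$ on its original convergent domain $\Q 1_{X^*}\oplus x_0\QX x_1$, I would remark that the restriction of $\zeta$ there is itself a morphism of shuffle algebras (the iterated-integral representation turns shuffle into ordinary product of MZVs and convergent words are stable under $\shuffle$), and that it coincides with $\zeta_\shuffle$ on the generating set $\Lyn X - X$; Lyndon freeness then forces equality on the whole convergent subalgebra.

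A completely symmetric construction produces $\zeta_{\stuffle}$ from $\zeta_{\stuffle}(y_1)=0$ and $\zeta_{\stuffle}(l)=\zeta(l)$ for $l\in\Lyn Y-\{y_1\}$, once one observes that under the ordering $y_1>y_2>\ldots$ every Lyndon word of length $\ge2$ in $Y$ must begin with a letter strictly smaller than $y_1$, so $\Lyn Y - \{y_1\}\subset(Y-\{y_1\})Y^*$ is contained in the convergent domain $\Q 1_{Y^*}\oplus (Y-\{y_1\})\QY$ of $\zeta$. For the required compatibilities on such an $l$, the harmonic sum $\H_l(n)$ converges to $\zeta(l)$, so its finite part in the scale $\{n^{-a}\log^b n\}_{a,b\in\N}$ is exactly $\zeta(l)$, giving $\gamma_l=\zeta(l)=\zeta_{\stuffle}(l)$; simultaneously $\pi_X(l)\in x_0X^*x_1$, and Abel's theorem \eqref{zetavalues} together with the definition \eqref{cvg} yields $\zeta_{\shuffle}(\pi_X(l))=\zeta(\pi_X(l))=\zeta(l)$.

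The main subtlety I anticipate is precisely the consistency of the ad hoc assignments $\zeta_{\shuffle}(x_0)=\zeta_{\shuffle}(x_1)=0$ and $\zeta_{\stuffle}(y_1)=0$ with the previously defined values of the poly-morphism $\zeta$ on convergent polynomials; this is exactly the reason for invoking the Lyndon transcendence-basis theorem, since $x_0,x_1$ (respectively $y_1$) are algebraically independent from $\Lyn X - X$ (respectively $\Lyn Y - \{y_1\}$) in the shuffle (respectively stuffle) algebra, so no hidden relation can constrain these regularization values. Everything else reduces to a routine verification that the morphism so defined takes its values in $\calZ$, which is immediate from the choice of values on the transcendence basis.
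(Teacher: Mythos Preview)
Your argument is correct. The paper does not supply its own proof of this proposition; it is quoted from the references \cite{JSC,acta,VJM}. Your approach---fixing values on the Lyndon transcendence bases and invoking the universal property of the free commutative polynomial algebras $(\QX,\shuffle)\cong\Q[\Lyn X]$ and $(\QY,\stuffle)\cong\Q[\Lyn Y]$---is exactly the standard one and matches in spirit what the cited sources do.

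One remark on framing: the surrounding text of the paper emphasises the dual, generating-series viewpoint. There, one first builds the group-like elements $Z_{\shuffle}$ and $Z_{\stuffle}$ (as ordered exponentials over $\Lyn X-X$ and $\Lyn Y-\{y_1\}$), and then \emph{defines} $\zeta_{\shuffle}(w):=\scal{Z_{\shuffle}}{w}$ and $\zeta_{\stuffle}(w):=\scal{Z_{\stuffle}}{w}$; the character property is then read off from group-likeness rather than from the universal property of polynomial algebras. The two constructions are of course equivalent (group-like series are exactly characters of the dual shuffle/stuffle bialgebra), but the generating-series formulation makes the values $\zeta_{\shuffle}(x_0)=\zeta_{\shuffle}(x_1)=\zeta_{\stuffle}(y_1)=0$ appear automatically (no Lyndon letter occurs in the product defining $Z_{\shuffle}$ or $Z_{\stuffle}$), whereas in your presentation they are stipulated and then shown to be consistent. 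Either route is fine; yours is slightly more elementary, the paper's is closer to how the regularisation actually arises analytically from the asymptotic expansions of $\L(z)$ and $\H(n)$.

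A minor point you could make more explicit: you use that the convergent subalgebra $\Q1_{X^*}\oplus x_0\QX x_1$ coincides with the shuffle subalgebra generated by $\Lyn X-X$ (and similarly on the $Y$ side). This is true and standard, but it is the step that guarantees your $\zeta_{\shuffle}$ genuinely extends the original $\zeta$ on \emph{all} convergent words, not just on convergent Lyndon words.
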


Now, to regularize  $\{\zeta(s_1,\ldots,s_r)\}_{(s_1,\ldots,s_r)\in\C^r}$, we use

\begin{lemma}[\cite{Ngo}]
\begin{enumerate}
\item The power series $x_0^*$ and $x_1^*$ are transcendent over $\CX$.

\item The family $\{x_0^*,x_1^*\}$ is algebraically independent over
$(\CX,\shuffle,1_{X^*})$ within $(\CXX,\shuffle,1_{X^*})$.

\item The module $(\CX,\shuffle,1_{X^*})[x_0^*,x_1^*,(-x_0)^*]$ is $\CX$-free
and the family
$\{(x_0^*)^{\shuffle k}\shuffle (x_1^*)^{\shuffle l}\}^{(k,l)\in\Z\times\N}$
forms a $\CX$-basis of it.

Hence, $\{w\shuffle(x_0^*)^{\shuffle k}\shuffle(x_1^*)^{\shuffle l}\}^{(k,l)
\in\Z\times\N}_{w\in X^*}$ is a $\C$-basis of it.

\item One has, for any $x_i\in X$,
$\serie{\C^{\mathrm{rat}}}{x_i}=\mathrm{span}_{\C}\{(tx_i)^*\shuffle{\C}\pol{x_i}\vert{t\in\C}\}$.
\end{enumerate}
\end{lemma}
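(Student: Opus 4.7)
The four assertions are tightly linked: (1) is the $j=0$ case of (2), (3) follows from (2) upon inverting $x_0^*$, and (4) is a one-letter statement amenable to partial fractions. So the linchpin is (2), and that is where I will concentrate.

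My plan for (2) is to push everything through the injective morphism $\Li_{\bullet}$. Suppose
\[
\sum_{i,j} P_{i,j} \shuffle (x_0^*)^{\shuffle i} \shuffle (x_1^*)^{\shuffle j} = 0, \qquad P_{i,j} \in \CX,
\]
with only finitely many nonzero $P_{i,j}$. Applying the extension of $\Li_{\bullet}$ to rational series described in \eqref{lazard}, and using the identity $\Li_{(x_0^*)^{\shuffle i}\shuffle(x_1^*)^{\shuffle j}}(z) = z^i(1-z)^{-j}$ recorded in example~1 above, this gives the analytic identity
\[
\sum_{i,j} \Li_{P_{i,j}}(z)\, z^i (1-z)^{-j} = 0 \quad \text{on } \Omega.
\]
The key claim is then the $\C$-linear independence of the family $\{z^i(1-z)^{-j}\Li_w(z)\}_{w\in X^*,\, i\ge 0,\, j\ge 0}$ in $\calH(\Omega)$. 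I would prove this by asymptotic separation: in the scale $\{(1-z)^a\log^b(1-z)^{-1}\}_{a\in\Z,b\in\N}$ of \eqref{fp1}, each $\Li_w$ expands with only nonnegative powers of $(1-z)$, so the factor $(1-z)^{-j}$ distinguishes different values of $j$ by strict order of growth; a symmetric expansion at $z=0$ in the scale $\{z^a\log^b z\}$ separates the $i$'s. Once this is established, the identity reduces, for each fixed $(i,j)$, to $\Li_{P_{i,j}} = 0$, and the injectivity of $\Li_{\bullet}$ in \eqref{Li} forces $P_{i,j}=0$.

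Point (1) is then immediate (set $j = 0$). For (3), I first verify by a direct shuffle calculation that $x_0^* \shuffle (-x_0)^* = ((1-1)x_0)^* = 1_{X^*}$, so $(-x_0)^*$ is the shuffle-inverse of $x_0^*$; thus the ring in question is simply the localization at $x_0^*$ of the polynomial ring $\CX[x_0^*,x_1^*]$, which by (2) is free on $\{(x_0^*)^{\shuffle k}\shuffle(x_1^*)^{\shuffle l}\}_{(k,l)\in\Z\times\N}$. Tensoring with any $\C$-basis of $\CX$ (for instance the standard word basis $X^*$) yields the $\C$-basis stated in (3). For (4), every $S\in\serie{\C^{\mathrm{rat}}}{x_i}$ is the Taylor expansion of a rational function $p(x)/q(x)$ with $q(0)\neq 0$; by partial fractions over $\C$ it is a finite $\C$-combination of monomials $x^k/(1-tx)^{n}$. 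The explicit shuffle computation $(tx_i)^*\shuffle x_i^k$ corresponds, under $x_i\mapsto x$, to $x^k/(1-tx)^{k+1}$, and varying $k$ together with $t$ produces a spanning set for all such monomials inside $\sum_{t\in\C}(tx_i)^*\shuffle\C\pol{x_i}$; the reverse inclusion is clear.

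The main obstacle I anticipate is the rigorous linear-independence argument underpinning (2). Polylogarithms $\Li_w(z)$ mix powers of $(1-z)$ and $\log(1-z)^{-1}$ in nontrivial ways near $z=1$, so one must be precise about the leading asymptotics and invoke the framework of generalized asymptotic expansions in the scales \eqref{fp1}--\eqref{fp2} used throughout the paper. Once that separation is in hand, everything else is essentially bookkeeping; invertibility of $x_0^*$ for shuffle and partial fractions handle (3) and (4) with only routine calculation.
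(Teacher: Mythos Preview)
The paper does not prove this lemma at all: it is quoted verbatim from \cite{Ngo}, with no argument supplied here. So there is no in-paper proof to compare your proposal against.

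That said, your strategy for (2) --- transport the putative algebraic relation through the extended polylogarithm morphism and then invoke the $\C$-linear independence of the family $\{z^{i}(1-z)^{-j}\Li_w(z)\}_{w\in X^*,\,i,j\ge0}$ on~$\Omega$ --- is exactly the circle of ideas behind \cite{Linz} and \cite{Ngo}, and is the natural route. Two points deserve more care than your sketch gives them. First, you are using that the \emph{extended} map $\Li_{\bullet}$ of~\eqref{lazard} is still a shuffle morphism on $\CX\shuffle\C[x_0^*]\shuffle\C[x_1^*]$; this is true, but it is a statement about the extension, not about the original injective morphism~\eqref{Li}, and should be invoked explicitly. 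Second, the asymptotic separation at $z\to1$ and $z\to0$ is subtler than ``different $j$'s give different growth'': polylogarithms $\Li_w$ with $w$ ending in $x_0$ themselves carry powers of $\log z$ near~$0$, and those with $w$ beginning in $x_1$ carry powers of $\log(1-z)^{-1}$ near~$1$, so the two-scale argument must be organised as a genuine filtration (this is precisely what \cite{Linz} does via differential Galois/Wronskian techniques rather than naked asymptotics). Your derivations of (1), (3) from (2) are fine; note only that (1) needs both specialisations $j=0$ and $i=0$, not just $j=0$. For (4), your partial-fraction argument is correct once you observe, as the paper records, that $(ax_i)^{*n}=(ax_i)^*\shuffle(1+ax_i)^{n-1}$, which shows every pole order $1/(1-tx_i)^n$ already lies in $(tx_i)^*\shuffle\C\pol{x_i}$.
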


Since, for any $t\in\C,\abs{t}<1$, one has
$\Li_{(tx_1)^*}(z)=(1-z)^{-t}$ and
\begin{eqnarray}
\H_{\pi_Y(tx_1)^*}=\Sum_{k\ge0}\H_{y_1^k}t^k=
\exp\biggl(-\Sum_{k\ge1}\H_{y_k}\Frac{(-t)^k}{k}\biggr)
\end{eqnarray}
then, with the notations of Proposition \ref{to0},
we extend extend the characters $\zeta_{\shuffle}$ and $\gamma_{\bullet}$,
defined in Proposition \ref{to0}, over $\CX\shuffle\C[x_1^*]$
and $\CY\stuffle\C[y_1^*]$, respectively, as follows

\begin{proposition}[\cite{Ngo}]\label{reg}
The characters $\zeta_{\shuffle}$ and $\gamma_{\bullet}$ can be extended as follows
$$\begin{array}{rcl}
\zeta_{\shuffle}:(\CX\shuffle\C[x_1^*],\shuffle,1_{X^*})&\longrightarrow&(\C,\times,1_{\C}),\\
\forall t\in\C,\abs{t}<1,\quad(tx_1)^*&\longmapsto&1_{\C}.\\
\gamma_{\bullet}:(\CY\stuffle\C[y_1^*],\stuffle,1_{Y^*})&\longrightarrow&(\C,\times,1_{\C}),\\
\forall t\in\C,\abs{t}<1,\quad(ty_1)^*&\longmapsto&\exp\biggl(\gamma t-\Sum_{n\ge2}\zeta(n)\Frac{(-t)^n}{n}\biggr)=\frac1{\Gamma(1+t)}.
\end{array}$$
\end{proposition}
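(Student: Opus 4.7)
The proof naturally splits into two parallel parts (shuffle and stuffle), each proceeding in two stages: first invoke a transcendence statement to reduce the construction of the extension to prescribing a single value, then verify that the prescribed formula matches this value via a finite-part computation.

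For $\zeta_{\shuffle}$, the starting point is the shuffle identity $x_1^{\shuffle n}=n!\,x_1^n$, which gives
$$(tx_1)^*\;=\;\sum_{n\ge 0}t^n x_1^n\;=\;\sum_{n\ge 0}\frac{(tx_1)^{\shuffle n}}{n!},$$
so $(tx_1)^*$ is literally the shuffle exponential of $tx_1$. Parts~2--3 of the preceding lemma imply that $\C\pol{X}\shuffle\C[x_1^*]$ is a polynomial algebra in $x_1^*$ over $(\C\pol{X},\shuffle)$, so any character extension is determined by its value at $x_1^*$. Since $\zeta_{\shuffle}(x_1)=0$ by Proposition~\ref{to0}, the character property yields $\zeta_{\shuffle}((tx_1)^*)=\exp\bigl(t\,\zeta_{\shuffle}(x_1)\bigr)=1$, forcing $\zeta_{\shuffle}(x_1^*)=1$, and this choice is visibly consistent with all the prescribed values on the family $\{(tx_1)^*\}_{|t|<1}$.

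For $\gamma_{\bullet}$, an analogous transcendence of $y_1^*$ over $(\C\pol{Y},\stuffle)$---proved by the same type of argument as in the lemma---again reduces the problem to specifying a single value, but the formula is more delicate. The plan is to pass through the injective morphism $\H_{\bullet}$ of~\eqref{H}: since $\H_{y_1^k}(n)=e_k(1,1/2,\ldots,1/n)$, one obtains the closed form
$$\H_{(ty_1)^*}(n)\;=\;\sum_{k\ge 0}t^k\,\H_{y_1^k}(n)\;=\;\prod_{i=1}^n\!\Bigl(1+\frac{t}{i}\Bigr)\;=\;\frac{\Gamma(n+1+t)}{\Gamma(1+t)\,\Gamma(n+1)}.$$
Stirling's expansion gives $\Gamma(n+1+t)/\Gamma(n+1)\sim n^t=e^{t\log n}$, so applying the finite-part procedure in the scale $\{n^a\log^b(n)\}_{a,b}$ of~\eqref{fp2} (setting $\log n\mapsto 0$) leaves $1/\Gamma(1+t)$; the Weierstrass expansion $\log\Gamma(1+t)=-\gamma t+\sum_{k\ge 2}\zeta(k)(-t)^k/k$ (valid for $|t|<1$) then rewrites this as $\exp\bigl(\gamma t-\sum_{k\ge 2}\zeta(k)(-t)^k/k\bigr)$, as claimed.

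The main obstacle will be the consistency check: the Taylor coefficients of $1/\Gamma(1+t)$ prescribe the values $\gamma_{\bullet}(y_1^n)$, which must be compatible with those on $\C\pol{Y}$ already fixed by $\gamma_{y_1}=\gamma$ and $\gamma_{y_k}=\zeta(k)$ for $k\ge 2$, together with the stuffle relations linking $y_1^n$ and the $y_k$. I would handle this by pulling the argument back through $\H_{\bullet}$: its stuffle homomorphism property gives $\H_{(ty_1)^*\stuffle(uy_1)^*}(n)=\H_{(ty_1)^*}(n)\,\H_{(uy_1)^*}(n)$, whose finite parts are $1/[\Gamma(1+t)\Gamma(1+u)]$ on both sides by the previous paragraph; matching Taylor coefficients in $t$ and $u$ simultaneously recovers all the required relations, so the extension is a well-defined character on $\C\pol{Y}\stuffle\C[y_1^*]$.
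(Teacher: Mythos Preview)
The paper does not supply a self-contained proof of this proposition; it is cited from \cite{Ngo}, and the only in-text justification is the sentence immediately preceding the statement, which records the two identities
\[
\Li_{(tx_1)^*}(z)=(1-z)^{-t}
\qquad\text{and}\qquad
\H_{\pi_Y(tx_1)^*}=\exp\biggl(-\sum_{k\ge1}\H_{y_k}\frac{(-t)^k}{k}\biggr),
\]
and then reads off the extended values by taking finite parts in the scales \eqref{fp1}--\eqref{fp2} (so $\log(1-z)\mapsto 0$ gives $1$, and $\H_{y_1}\mapsto\gamma$, $\H_{y_k}\mapsto\zeta(k)$ gives $1/\Gamma(1+t)$). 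Your argument is correct and lands on the same values, but the route differs in flavor.

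For $\zeta_{\shuffle}$ you bypass the polylogarithm realization entirely and argue purely combinatorially via $(tx_1)^*=\exp_{\shuffle}(tx_1)$ together with $\zeta_{\shuffle}(x_1)=0$; the paper instead pushes through $\Li_{\bullet}$ to the concrete function $(1-z)^{-t}$. Your approach is cleaner algebra; the paper's has the advantage of making transparent that the prescription really is the finite part of the analytic object. For $\gamma_{\bullet}$ the contrast is sharper: the paper uses the \emph{exponential} generating identity above (Newton's relation between power sums $\H_{y_k}$ and elementary symmetric sums $\H_{y_1^k}$), so that the finite-part substitution is a one-liner, whereas you use the \emph{product} form $\H_{(ty_1)^*}(n)=\prod_{i\le n}(1+t/i)=\Gamma(n+1+t)/(\Gamma(1+t)\Gamma(n+1))$ and then invoke Stirling. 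Both are valid; the paper's version avoids Stirling but hides the Gamma function until the very end (via the Weierstrass expansion), while yours makes the Gamma function appear structurally and explains \emph{why} $1/\Gamma(1+t)$ is the natural answer. Your final consistency paragraph is a genuine addition over what the paper sketches.
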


Therefore, in virtue of Propositions \ref{explicit} and \ref{reg}, we obtain then
\begin{theorem}\label{fin}
\begin{enumerate}
\item For any $(s_1,\ldots,s_r)\in\N_+^r$ associated with $w\in Y^*$, there exists
a unique polynomial $p\in\Z[t]$ of valuation $1$ and of degree $(w)+\abs{w}$ such that 
$$\begin{array}{rcll}
\check p(x_1^*)&=&R_w&\in(\Z[x_1^*],\shuffle,1_{X^*})\\
p((1-z)^{-1})&=&\Li_{R_w}(z)&\in(\Z[(1-z)^{-1}],\times,1),\\
\tilde p((n)_{\bullet})&=&\H_{\pi_Y(R_w)}(n)&\in(\Q[(n)_{\bullet}],\times,1),\\
\zeta_{\shuffle}(-s_1,\ldots,-s_r)=p(1)&=&\zeta_{\shuffle}(R_w)&\in(\Z,\times,1),\\
\gamma_{-s_1,\ldots,-s_r}=\tilde p(1)&=&\gamma_{\pi_Y(R_w)}&\in(\Q,\times,1).
\end{array}$$

\item Let $\Upsilon(n)\in\serie{\Q[(n)_{\bullet}]}{Y}$
and $\Lambda(z)\in\serie{\Q[(1-z)^{-1}][\log(z)]}{X}$
be the noncommutative generating series of $\{\H_{\pi_Y(R_w)}\}_{w\in Y^*}$
and $\{\Li_{R_{\pi_Y(w)}}\}_{w\in X^*}$~:
\begin{eqnarray*}
\Upsilon:=\Sum_{w\in Y^*}\H_{\pi_Y(R_w)}w\mbox{ and }
\Lambda:=\Sum_{w\in X^*}\Li_{R_{\pi_Y(w)}}w,\mbox{ with }
\scal{\Lambda(z)}{x_0}=\log(z).
\end{eqnarray*}
Then $\Upsilon$ and $\Lambda$ are group-like,
for respectively $\Delta_{\stuffle}$ and $\Delta_{\shuffle}$, and~:
\begin{eqnarray*}
\Upsilon=\Prod_{l\in\Lyn Y}^{\searrow}e^{\H_{\pi_Y(R_{\Sigma_l})}\Pi_l}
&\mbox{and}&
\Lambda=\Prod_{l\in\Lyn X}^{\searrow}e^{\Li_{R_{\pi_Y(S_l)}}P_l}.
\end{eqnarray*}

\item Let $Z^-_{\gamma}\in\serie{\Q}{Y}$ and $Z^-_{\shuffle}\in\serie{\Z}{X}$
be the noncommutative generating series of $\{\gamma_{\pi_Y(R_w)}\}_{w\in Y^*}$
and\footnote{On the one hand, by Proposition \ref{to0},
one has $\scal{Z^-_{\shuffle}}{x_0}=\zeta_{\shuffle}(x_0)=0$.

On the other hand, since $R_{y_1}=(2x_1)^*-x_1^*$ then
$\Li_{R_{y_1}}(z)=(1-z)^{-2}-(1-z)^{-1}$ and
$\H_{\pi_Y(R_{y_1})}(n)={n\choose2}-{n\choose1}$.
Hence, one also has
$\scal{Z^-_{\shuffle}}{x_1}=\zeta_{\shuffle}(R_{\pi_Y(y_1)})=0$
and $\scal{Z^-_{\gamma}}{x_1}=\gamma_{\pi_Y(R_{y_1})}=-1/2$.}
$\{\zeta_{\shuffle}(R_{\pi_Y(w)})\}_{w\in X^*}$, respectively~:
\begin{eqnarray*}
Z^-_{\gamma}:=\Sum_{w\in Y^*}\gamma_{\pi_Y(R_w)}w&\mbox{and}&
Z^-_{\shuffle}:=\Sum_{w\in X^*}\zeta_{\shuffle}(R_{\pi_Y(w)})w.
\end{eqnarray*}
Then $Z^-_{\gamma}$ and $Z^-_{\shuffle}$ are group-like,
for respectively $\Delta_{\stuffle}$ and $\Delta_{\shuffle}$, and~:
\begin{eqnarray*}
Z^-_{\gamma}=\Prod_{l\in\Lyn Y}^{\searrow}e^{\gamma_{\pi_Y(R_{\Sigma_l})}\Pi_l}
&\mbox{and}&
Z^-_{\shuffle}=\Prod_{l\in\Lyn X}^{\searrow}e^{\zeta_{\shuffle}(\pi_Y(S_l))P_l}.
\end{eqnarray*}
\end{enumerate}
\end{theorem}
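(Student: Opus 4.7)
The plan is to derive Theorem~\ref{fin} from Propositions~\ref{explicit} and~\ref{reg}, combined with the Lyndon--PBW decomposition used for $\L$, $\H$, $Z_\shuffle$ and $Z_\stuffle$ in Section~\ref{polylogarithms}. The conceptual point is that $R_\bullet$ transports the divergent data $(\Li^-_w,\H^-_w)$ into elements of the shuffle subalgebra $\Z[x_1^*]$, on which the extended characters $\zeta_\shuffle$ and $\gamma_\bullet$ of Proposition~\ref{reg} can be evaluated in closed form.

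For part~(1), I start from Proposition~\ref{explicit}. Part~(4) furnishes the explicit $R_w \in (\Z[x_1^*],\shuffle,1_{X^*})$ of degree $(w)+|w|$; part~(1), via the isomorphism $\lambda$, produces the unique polynomial $p \in \Z[t]$ with $\check p(x_1^*) = R_w$ and $p((1-z)^{-1}) = \Li_{R_w}(z) = \Li^-_w(z)$; and via $\eta$ the same datum encodes $\tilde p((n)_\bullet) = \H_{\pi_Y(R_w)}(n) = \H^-_w(n)$. The valuation-$1$ property follows from $\H^-_w(0)=0$ for $w \in Y^*$, which forces $p_0=0$. For the regularized values, expand $R_w = \sum_{k\geq 1} p_k (x_1^*)^{\shuffle k}$ using $(x_1^*)^{\shuffle k} = (kx_1)^*$ and $\pi_Y((kx_1)^*)=(ky_1)^*$. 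Since $\zeta_\shuffle$ is a character with $\zeta_\shuffle(x_1^*)=1$,
$$\zeta_\shuffle(R_w) = \sum_k p_k \bigl(\zeta_\shuffle(x_1^*)\bigr)^k = p(1),$$
and Proposition~\ref{reg} gives $\gamma_\bullet((ky_1)^*) = 1/\Gamma(1+k) = 1/k!$ for $k \in \N_{>0}$, whence $\gamma_{\pi_Y(R_w)} = \sum_k p_k/k! = \tilde p(1)$.

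For parts~(2) and~(3), I would show that each of the four coefficient maps $w \mapsto \H_{\pi_Y(R_w)}$, $w \mapsto \Li_{R_{\pi_Y(w)}}$, $w \mapsto \gamma_{\pi_Y(R_w)}$, $w \mapsto \zeta_\shuffle(R_{\pi_Y(w)})$ is a character from $(\QY,\stuffle,1_{Y^*})$ or $(\QX,\shuffle,1_{X^*})$ to its target. This reduces to the multiplicativity $R_{u\top v} = R_u \shuffle R_v$ (the extension by linearity hinted at in Proposition~\ref{explicit}~(2)), composed with the injective characters $\H_\bullet$, $\Li_\bullet$ of \eqref{H}, \eqref{Li} and with $\zeta_\shuffle$, $\gamma_\bullet$ from Proposition~\ref{reg}. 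Once we know each map is a character, the corresponding generating series is group-like for the appropriate coproduct; the decreasing-Lyndon exponential factorizations then follow from the standard PBW consequence already applied to $\L$, $\H$, $Z_\shuffle$, $Z_\stuffle$ in Section~\ref{polylogarithms}, with the scalar coefficients $\H_{\pi_Y(R_{\Sigma_l})}$, $\Li_{R_{\pi_Y(S_l)}}$, $\gamma_{\pi_Y(R_{\Sigma_l})}$, $\zeta_\shuffle(\pi_Y(S_l))$ read off by pairing against the transcendence bases $\{\Sigma_l\}_{l\in\Lyn Y}$ and $\{S_l\}_{l\in\Lyn X}$.

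The main obstacle I foresee is the full multiplicativity of $R_\bullet$: Proposition~\ref{explicit}~(2) only records $R_{y_k \top y_l} = R_{y_k}\shuffle R_{y_l}$, and extending this to arbitrary words of $Y_0^*$ requires showing that the explicit formula of Proposition~\ref{explicit}~(4) is compatible with the full $\top$-product, and then that under $\pi_Y$ this $\top$-multiplicativity descends to $\stuffle$-multiplicativity on $Y^*$. Granting this (which can be checked directly from the nested binomial-Stirling description of $R_{y_{s_1}\ldots y_{s_r}}$), the remainder of the theorem is a straightforward assembly of Proposition~\ref{reg} with the group-like/Lyndon-factorization machinery already in place.
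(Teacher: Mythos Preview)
Your proposal is correct and follows exactly the route the paper itself indicates: the paper gives no proof beyond the clause ``in virtue of Propositions~\ref{explicit} and~\ref{reg}, we obtain then Theorem~\ref{fin}'', and your argument is precisely a fleshing-out of that derivation, including the honest identification of the multiplicativity of $R_\bullet$ as the point requiring verification. One small caution: your evaluation $\gamma_\bullet((ky_1)^*)=1/\Gamma(1+k)$ invokes Proposition~\ref{reg} at integer $t=k\geq1$, outside the stated range $|t|<1$; this is best justified by noting that $\pi_Y(R_w)\in\C[y_1^*]$ under stuffle and computing $\gamma_\bullet$ directly from the character property and the known values $\gamma_{y_1}=\gamma$, $\gamma_{y_k}=\zeta(k)$, rather than by formal substitution.
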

Moreover, $\mathrm{F.P.}_{n\rightarrow+\infty}\Upsilon(n)=Z^-_{\gamma}$
and $\mathrm{F.P.}_{z\rightarrow1}\Lambda(z)=Z^-_{\shuffle}$ meaning that,
for any $v\in Y^*$ and $u\in X^*$, one has
\begin{eqnarray}
\mathrm{f.p.}_{n\rightarrow+\infty}\scal{\Upsilon(n)}{v}=\scal{Z^-_{\gamma}}{v}
&\mbox{and}&\mathrm{f.p.}_{z\rightarrow1}\scal{\Lambda(z)}{u}=\scal{Z^-_{\shuffle}}{u}.
\end{eqnarray}


\begin{thebibliography}{99}
\bibitem{Bui}{V.C. Bui, G.H.E. Duchamp, Hoang Ngoc Minh}.--
\textit{Structure of Polyzetas and Explicit Representation on Transcendence Bases of Shuffle and Stuffle Algebras},
J. of Sym. Comp. (2016).
%
\bibitem{JSC}{Costermans C., Hoang Ngoc Minh}.--
\textit{Noncommutative algebra, multiple harmonic sums and applications in discrete probability},
J. of Sym. Comp. (2009), 801-817.
%
\bibitem{Linz}{M. Deneufch\^atel, G.H.E. Duchamp, Hoang Ngoc Minh, A.I. Solomon}.--
\textit{Independence of hyperlogarithms over function fields via algebraic combinatorics},
dans Lecture Notes in Computer Science (2011), Volume 6742/2011, 127-139.
%
\bibitem{Ngo}{G.H.E. Duchamp, Hoang Ngoc Minh, Q.H. Ngo}--
\textit{Harmonic sums and polylogarithms at negative multi-indices},
J. of Sym. Comp. (2016).
%
\bibitem{Ngo2}{G.H.E. Duchamp, Hoang Ngoc Minh, Q.H. Ngo}--
\textit{Double regularization of polyzetas at negative multiindices and rational extensions},
en pr\'eparation.
%
\bibitem{drinfeld}{V.~Drinfel'd}--
\textit{Quantum group}, Proc. Int. Cong. Math., Berkeley, 1986.
%
\bibitem{drinfeld1}{V.~Drinfel'd}--
\textit{Quasi-Hopf Algebras}, Len. Math. J., 1, 1419-1457, 1990.
%
\bibitem{drinfeld2}{V.~Drinfel'd}--
\textit{On quasitriangular quasi-hopf algebra and a group closely connected with $\mathrm{gal}(\bar q/q)$},
Len. Math. J., 4, 829-860, 1991.
%
\bibitem{Ecalle3}{Jean \'Ecalle}--\textit{L'\'equation du pont et la classification analytique des objets locaux}, in \textit{Les fonctions r{\'e}surgentes}, \textbf{3}, Publications de l'Universit{\'e} de Paris-Sud, D{\'e}partement de Math{\'e}matique (1985). 
%
\bibitem{flajoletodlyzko}{Philippe Flajolet \& Andrew Odlyzko}--
\textit{Singularity Analysis of Generating Functions}, SIAM J. Discrete Math., 3(2), pp. 216–240, 1982. 
%
\bibitem{IMACS}{Hoang Ngoc Minh}--
\textit{Summations of Polylogarithms via Evaluation Transform},
Math. \& Computers in Simulations, 1336, pp 707-728, 1996.
%
\bibitem{FPSAC96}{Hoang Ngoc Minh \& Jacob~G.}.--
\textit{Symbolic Integration of meromorphic differential equation via Dirichlet functions},
Disc. Math. 210, pp. 87-116, 2000.
%
\bibitem{SLC44}{Hoang Ngoc Minh, Jacob~G., N.E.~Oussous, M.~Petitot}--
\textit{De l'alg\`ebre des $\zeta$ de Riemann multivari\'ees \`a l'alg\`ebre des $\zeta$ de Hurwitz multivari\'ees},
journal \'electronique du S\'eminaire Lotharingien de Combinatoire, 44, (2001).
%
\bibitem{FPSAC97}{Hoang Ngoc Minh \& M.~Petitot}--
\textit{Lyndon words, polylogarithmic functions and the Riemann $\zeta$ function},
Disc. Math., 217, 2000, pp. 273-292.
%
\bibitem{acta}{Hoang Ngoc Minh}--
\textit{On a conjecture by Pierre Cartier about a group of associators},
Acta Math. Vietnamica (2013),  38, Issue 3,  339-398.
%
\bibitem{VJM}{Hoang Ngoc Minh}--
\textit{Structure of polyzetas and Lyndon words}, 
Vietnamese Math. J. (2013),  41, Issue 4, 409-450.
%
\bibitem{lemurakami}{T.Q.T.~L\^e \& J.~Murakami}--
\textit{Kontsevich's integral for Kauffman polynomial}, Nagoya Math., pp 39-65, 1996.
%
\bibitem{reutenauer}{Reutenauer C.}--
\textit{Free Lie Algebras}, London Math. Soc. Monographs (1993).
%
\bibitem{zagier}{D. Zagier}--
\textit{Values of zeta functions and their applications},
in ``First European Congress of Mathematics",  vol. 2, Birkh\"auser, pp. 497-512, 1994.
%
\end{thebibliography}
\end{document}